\newcommand{\be}{\begin{equation}}
\newcommand{\ee}{\end{equation}}
\newcommand{\bse}{\begin{subequations}}
\newcommand{\ese}{\end{subequations}}
\newcommand{\R}{\mathbb{R}}
\newcommand{\placeholder}{\,\cdot\,}
\newcommand{\domain}{\Omega}
\numberwithin{equation}{section}
\theoremstyle{plain} 
\newtheorem{theorem}{Theorem}[section]
\newtheorem{lemma}[theorem]{Lemma}
\theoremstyle{remark}
\title[Rigidity of internal waves]{Rigidity of three-dimensional internal waves with constant vorticity}
\author[R. M. Chen]{Robin Ming Chen}
\address{Department of Mathematics, University of Pittsburgh, Pittsburgh, PA 15260} 
\email{mingchen@pitt.edu}  
\author[L. Fan]{Lili Fan}
\address{College of Mathematics and Information Science, Henan Normal University, Xinxiang 453007, China} 
\email{fanlily89@126.com} 
\author[S. Walsh]{Samuel Walsh}
\address{Department of Mathematics, University of Missouri, Columbia, MO 65211, USA} 
\email{walshsa@missouri.edu} 
\author[M. H. Wheeler]{Miles H. Wheeler}
\address{Department of Mathematical Sciences, University of Bath, Bath BA2 7AY, United Kingdom}
\email{mw2319@bath.ac.uk}
\date{\today}
\begin{document}

\begin{abstract}
This paper studies the structural implications of constant vorticity for steady three-dimensional internal water waves.  It is known that in many physical regimes, water waves beneath vacuum that have constant vorticity are necessarily two dimensional.  The situation is more subtle for internal waves that traveling along the interface between two immiscible fluids.  When the layers have the same density, there is a large class of explicit steady waves with constant vorticity that are three-dimensional in that the velocity field and pressure depend on one horizontal variable while the interface is an arbitrary function of the other.  We prove the following rigidity result: every three-dimensional traveling internal wave with bounded velocity for which the vorticities in the upper and lower layers are nonzero, constant, and parallel must belong to this family.  If the densities in each layer are distinct, then in fact the flow is fully two dimensional.
\end{abstract}

\maketitle

\section{Introduction}

Depth-varying currents are ubiquitous in the ocean. They can arise from wind-wave interaction, boundary layer effects along the seabed, or tides \cite{miles1957,da1988steep,holthuijsen2010waves}.  Waves riding on currents are essentially rotational, and the interaction of waves with non-uniform currents is described by the vorticity \cite{peregrine1976interaction,jonsson1990wave}.  So far most of the theoretical works on water waves with non-zero vorticity pertains to two-dimensional flows.  The early 19th century work of Gerstner \cite{gerstner1809theorie} furnished a family of exact solutions with a particular nontrivial vorticity distribution that becomes singular at the free surface of the highest wave. Much later, Dubreil-Jacotin \cite{dubreil1934determination} proved the existence of small-amplitude waves with a general vorticity distribution.  After a surge of activity in this area over the last two decades, initiated by Constantin and Strauss \cite{constantin2004exact}, there is now a wealth of small- and large-amplitude existence results for water waves with vorticity; see \cite{haziot2022traveling} for a survey.

Despite these advances in the two-dimensional case, the understanding of three-dimensional rotational waves remains comparatively rudimentary. Currently, there are only two regimes in which existence is known: Lokharu, Seth, and Wahl\'en \cite{lokharu2020existence} have constructed small-amplitude three-dimensional waves with Beltrami-type flow, and Seth, Varholm, and Wahl\'en \cite{seth2022symmetric} obtained symmetric diamond waves with small vorticity. The first result is proved using a careful multi-parameter Lyapunov--Schmidt reduction, while the second involves a delicate fixed-point argument inspired by related problems in plasma physics.

Another body of important recent work concerns the \emph{rigidity} of the governing equations: for certain types of vorticity, the solutions necessarily inherit symmetries of the domain. A number of authors have obtained results of this type for the Euler equations posed in a fixed domain.  Moreover, it is known that finite-depth surface water waves beneath vacuum with non-zero \emph{constant} vorticity are forced to be two dimensional with the vorticity vector pointing in the horizontal direction orthogonal to that of the wave propagation; see \cite{constantin2009effect,constantin2011two,martin2017resonant,stuhlmeier2012constant} for flows beneath surface wave trains and surface solitary waves, \cite{wahlen2014non} for general steady waves, and \cite{martin2018non} for an extension to non-steady waves. Flows with geophysical effects are discussed in the survey article \cite{martin2022three}.

The present paper aims to investigate the structural ramifications of constant vorticity for steady three-dimensional internal water waves. An important feature of waves in the ocean is that the density is heterogeneous due to variations in temperature and salinity.  Commonly, this situation is modeled as two immiscible, superposed layers of constant density fluids.  The interface dividing these regions is a free boundary along which \emph{internal waves} can travel.  Similar to surface waves, the theoretical study on internal waves has been conducted almost exclusively in two dimensions; see \cite[Section 7]{haziot2022traveling}. To the authors' knowledge the only rigorous existence result for genuinely three-dimensional steady internal waves is due to Nilsson \cite{nilsson2019three}, where the flow is assumed to be layer-wise irrotational. It is then natural to ask whether the rigidity of surface water waves with constant vorticity has an internal wave counterpart.  As the latter system has many additional parameters, in principle we might expect it to support a greater variety of flows.  For instance, it can be shown if the vorticity is constant in each layer, then it must be horizontal, but its direction need not be the same in each layer.  On the other hand, if the vorticity vectors are parallel and nonvanishing, we are able to prove a rigidity result that completely characterizes the possible flow patterns. 

\subsection{Formulation}
Consider a three-dimensional traveling wave moving along the interface dividing two immiscible fluids of finite depth and under the influence of gravity.  Fix a Cartesian coordinate system $(x,y,z)$, where $z$ is the vertical direction and the wave propagates in the $xy$-plane.   The fluids are bounded above and below by rigid walls at heights $z = -h_1$ and $z = h_2$, for $h_1, h_2 > 0$.  Adopting a frame of reference moving with the wave renders the system time independent.  Suppose then that the interface between the layers is given by the graph of a $C^1$ function $\eta = \eta(x,y)$.  The fluid domain is thus $\domain := \domain_1 \cup \domain_2$, where the upper layer $\Omega_2$ and lower layer $\Omega_1$ take the form 
\begin{align*}
	\domain_1 &:= \left\{ (x,y,z) \in \mathbb{R}^3 : -h_1 < z <  \eta(x,y) \right\} \\
	\domain_2 &:= \left\{ (x,y,z) \in \mathbb{R}^3 : \eta(x,y) < z < h_2 \right\}.
\end{align*}
See Figure \ref{two-fluid figure} for an illustration.

\begin{figure}
  \centering
  \includegraphics[page=1,scale=1]{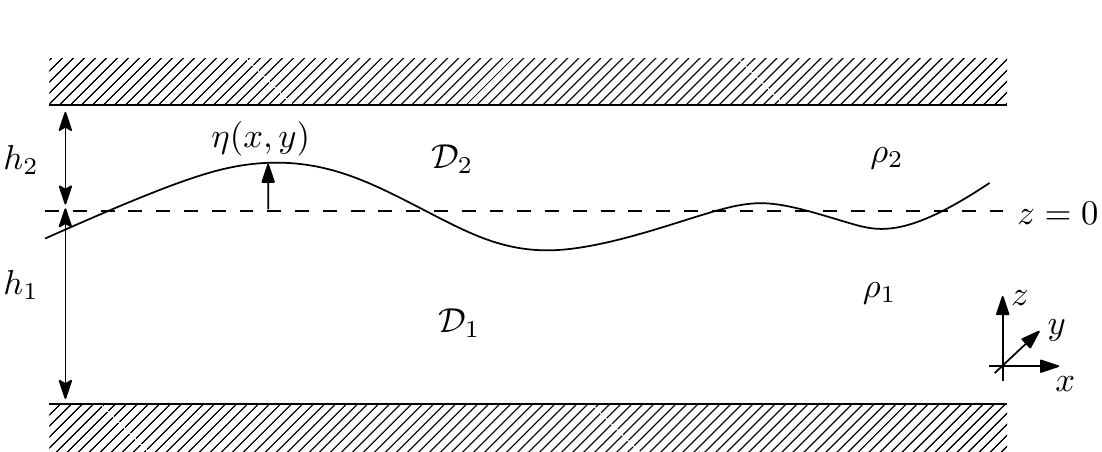}
  \vspace{-2ex}
  \caption{The two-fluid system. 
  \label{two-fluid figure} }  
\end{figure}

For water waves, it is physically reasonable to model the flow in each region as inviscid and incompressible with constant densities $\rho_1, \rho_2 > 0$.  The motion in $\Omega_i$ is described by the (relative) velocity field $\boldsymbol{u}_i := \left( u_i, v_i, w_i \right)$ and pressure $P_i$.   In the bulk, we impose the steady incompressible Euler equations: 
\bse\label{full problem}
\begin{align}
	\rho_i (\boldsymbol{u}_i \cdot \nabla) \boldsymbol{u}_i  & = -\nabla P_i + \rho_i \boldsymbol{g}, \label{momentum equation} \\
	\nabla \cdot \boldsymbol{u}_i & = 0, \label{incompressibility}
\end{align}
where $\boldsymbol{g} := (0,0,-g)$ is the (constant) gravitational acceleration vector.
The first of these mandates the conservation of momentum, while the second is the incompressibility condition.  The boundary conditions at the interface are the continuity of normal velocity and pressure:
\begin{align}
u_i \eta_x + v_i \eta_y & = w_i \quad \text{on } z = \eta(x, y), \label{kinematic on interface} \\
P_1  & = P_2   \quad  \text{on } z = \eta(x, y). \label{dynamic on interface}
\end{align}
   On the upper and lower rigid boundaries, the kinematic boundary conditions are
\be\label{kinematic on rigid}
\begin{aligned}
	w_1 & = 0 & \qquad & \textrm{on } z = -h_1 \\
	w_2 & = 0 & \qquad & \textrm{on } z = h_2.
\end{aligned}
\ee
\ese
These say simply that the velocity field is tangential to the rigid walls. Throughout this paper, we consider classical solutions for which $\boldsymbol{u}_i \in C^1(\overline{\domain_i}; \mathbb{R}^3)$, $P_i \in C^1(\overline{\domain_i})$, and $\eta \in C^1(\mathbb{R}^2)$.  In order to ensure there is a positive separation between the interface and the walls, we further assume that $-h_1 < \inf{\eta}$ and $\sup{\eta} < h_2$.  

Recall that the \emph{vorticity} in the layer $\domain_i$ is defined to be the vector field
\be\label{def vorticity}
	\boldsymbol{\omega}_i := \nabla \times \boldsymbol{u}_i = (\partial_y w_{i} - \partial_z v_{i},\,  \partial_z u_{i} - \partial_x w_{i},\, \partial_x v_{i} - \partial_y u_{i}).  
\ee 
Taking the curl of the momentum equation \eqref{momentum equation}, we find that each $\boldsymbol{\omega}_i$ satisfies the so-called steady vorticity equation 
\be\label{full vorticity eqn}
	(\boldsymbol{u}_i \cdot \nabla) \boldsymbol{\omega}_i = (\boldsymbol{\omega}_i \cdot \nabla) \boldsymbol{u}_i \qquad \textrm{in } \domain_i.
\ee
Suppose now that the vorticity in each layer is a nonzero constant
\be\label{const vorticity}
	\boldsymbol{\omega}_i = (\alpha_i, \beta_i, \gamma_i) \qquad \textrm{for } i = 1,2.
\ee
Then the advection term on the left-hand side of \eqref{full vorticity eqn} vanishes identically, while the vortex stretching term on the right-hand side becomes a constant directional derivative of $\boldsymbol{u}$:
\be
\label{vorticity eq}
	(\boldsymbol{\omega}_i \cdot \nabla) \boldsymbol{u}_i = 0 \qquad \textrm{in } \domain_i.
\ee
Thus, the velocity $\boldsymbol{u}_i$ is constant in the direction of $\boldsymbol{\omega}_i$.  As \eqref{full problem} is invariant under rotation about the $z$-axis, we can without loss of generality assume that $\boldsymbol{\omega}_2 = (0, \beta_2, \gamma_2)$, that is, the vorticity of the upper fluid lies in the $yz$-plane.  

From the vector identity
\[
(\boldsymbol{u} \cdot \nabla) \boldsymbol{u} + \boldsymbol{u} \times \boldsymbol{\omega} = (\nabla \cdot \boldsymbol{u}) \boldsymbol{u} + \frac12 \nabla (|\boldsymbol{u}|^2),
\]
one can rewrite \eqref{momentum equation} as 
\be\label{B eqn}
\boldsymbol{u}_i \times \boldsymbol{\omega}_i = \nabla H_i
\ee
where 
\be\label{Bernoulli quantity}
H_i := \frac12 |\boldsymbol{u}_i|^2 + \frac{P_i}{\rho_i} + g z
\ee
is called the Bernoulli function. From \eqref{B eqn} we  see that $H_i$ is constant along the vortex lines.

\subsection{Main results}

Our first theorem imposes a dimensionality constraint on the vorticity: if $\boldsymbol{u}_1$ and $\boldsymbol{u}_2$ are uniformly bounded, then both vorticity vectors $\boldsymbol{\omega}_1$ and $\boldsymbol{\omega}_2$ are necessarily two-dimensional and lie in the $xy$-plane.  A result of this type was first proved by Wahl\'en \cite{wahlen2014non} for steady gravity and capillary-gravity water waves beneath vacuum.  Martin \cite{martin2018non} later showed the same holds for the time-dependent case.  Adapting Wahl\'en's argument to the two-layer case requires some nontrivial new analysis due to the more complicated behavior at the interface.  Ultimately, we obtain the following.

\begin{theorem}\label{thm vorticity}
\label{vorticity theorem}
Consider a solution to the internal wave problem \eqref{full problem} such that $\|\boldsymbol{u}_1\|_{C^0}, \| \boldsymbol{u}_2 \|_{C^0} < \infty$ and $\boldsymbol{\omega}_1$ and $\boldsymbol{\omega}_2$ are nonzero constant vectors.  Then necessarily the third components of $\boldsymbol{\omega}_1$ and $\boldsymbol{\omega}_2$ both vanish.
\end{theorem}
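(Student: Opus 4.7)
My plan is to argue by contradiction: if $\gamma_i \neq 0$ for one of $i = 1, 2$, then the velocity field $\boldsymbol{u}_i$ cannot be uniformly bounded on the infinite slab $\overline{\domain_i}$. I focus on the upper layer with $\gamma_2 \neq 0$ (recalling the normalization $\alpha_2 = 0$); the analysis for $\domain_1$ with $\gamma_1 \neq 0$ is entirely parallel after replacing the top wall $\{z = h_2\}$ by the bottom wall $\{z = -h_1\}$.

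Since $\boldsymbol{\omega}_2 = (0, \beta_2, \gamma_2)$ is a nonzero constant vector, \eqref{vorticity eq} makes $\boldsymbol{u}_2$ constant along each vortex line, which is a straight line in the direction $\boldsymbol{\omega}_2$. Because $\gamma_2 \neq 0$ and $\sup \eta < h_2$, every point of $\overline{\domain_2}$ lies on such a line that, traversed in the direction $\operatorname{sgn}(\gamma_2)\,\boldsymbol{\omega}_2$, eventually reaches the upper rigid wall. The main technical step — and what makes the two-layer case more delicate than Wahl\'en's single-fluid version in \cite{wahlen2014non} — is to verify that the relevant portion of each such vortex line stays inside $\overline{\domain_2}$, so that the boundary datum $w_2|_{z=h_2} = 0$ may be propagated along it to yield $w_2 \equiv 0$ throughout $\overline{\domain_2}$. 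I expect this geometric step, which relies on the bounds $\inf \eta > -h_1$, $\sup \eta < h_2$ and the $C^1$ regularity of $\eta$, to be the main obstacle.

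Once $w_2 \equiv 0$, I integrate the remaining constraints. In the generic subcase $\beta_2 \neq 0$, the identities $\partial_z v_2 = 0$, $\partial_z u_2 = \beta_2$, $\partial_x v_2 - \partial_y u_2 = \gamma_2$, together with the invariance of $\boldsymbol{u}_2$ along $\boldsymbol{\omega}_2$ and incompressibility, force
\[
\boldsymbol{u}_2(x, y, z) = (\beta_2 z - \gamma_2 y + c,\; d,\; 0),
\]
which is unbounded in $y$ and so contradicts the hypothesis. In $\domain_1$ with $(\alpha_1, \beta_1) \neq (0, 0)$ the same recipe — now without the simplification $\alpha_1 = 0$ — yields explicit formulas in which $u_1$ and $v_1$ grow linearly along the horizontal direction $(\alpha_1, \beta_1)$.

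The remaining subcase is vertical vorticity, where the horizontal part of $\boldsymbol{\omega}_i$ vanishes (so $\beta_2 = 0$ in $\domain_2$, or $\alpha_1 = \beta_1 = 0$ in $\domain_1$). Here vortex lines are vertical, so $\boldsymbol{u}_i = \boldsymbol{u}_i(x, y)$; propagation still yields $w_i \equiv 0$, and $(u_i, v_i)$ is a bounded divergence-free planar field on $\mathbb{R}^2$ with constant nonzero vorticity $\gamma_i$. Writing $u_i = -\psi_y$, $v_i = \psi_x$ with $\Delta \psi = \gamma_i$ and setting $\phi := \psi - \gamma_i (x^2 + y^2)/4$, one obtains a harmonic function on $\mathbb{R}^2$ whose gradient has at most linear growth; a Liouville-type argument then forces $\phi$ to be a harmonic polynomial of degree at most two, and matching its quadratic coefficients against the boundedness of $(u_i, v_i)$ yields incompatible conditions that force $\gamma_i = 0$, completing the contradiction.
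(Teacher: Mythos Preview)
Your overall architecture matches the paper's: assume $\gamma_i\neq 0$, show $w_i\equiv 0$, then split into the cases $(\alpha_i,\beta_i)\neq(0,0)$ and $(\alpha_i,\beta_i)=(0,0)$ and derive a contradiction with boundedness. The case analysis you describe is essentially the paper's (your treatment of the vertical-vorticity subcase is correct but more elaborate than needed: once $w_i\equiv 0$ and $\partial_z u_i=\partial_z v_i=0$, the components $u_i,v_i$ are themselves bounded harmonic functions on $\mathbb{R}^2$, so Liouville applied directly to them gives constants and hence $\gamma_i=0$).

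The genuine gap is in the step you single out as the ``main technical step.'' The geometric claim you intend to prove --- that from every point of $\overline{\Omega_2}$ the upward vortex-line segment to $\{z=h_2\}$ remains in $\overline{\Omega_2}$ --- is \emph{false} in general. For instance, with $\boldsymbol{\omega}_2=(0,1,\varepsilon)$ for small $\varepsilon>0$ and an interface like $\eta(x,y)=\tfrac12\sin y$, the vortex line through a point just above a trough of $\eta$ will drift in $y$ toward a crest faster than it rises in $z$, and will cross below the graph of $\eta$ before reaching the lid. No amount of $C^1$ control on $\eta$ or positive separation from the walls prevents this, so the purely geometric verification you have in mind cannot succeed.

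The paper (and Wahl\'en's one-fluid argument, which is no less delicate here) sidesteps this entirely. One works first in the flat slab $\{\sup\eta < z < h_2\}\subset\Omega_2$, where every vortex-line segment up to the lid trivially stays inside; propagation of $w_2=0$ from $\{z=h_2\}$ therefore gives $w_2\equiv 0$ on that open slab. Then one uses that $w_2$ is harmonic in $\Omega_2$ (a consequence of constant vorticity and incompressibility), hence real analytic, so vanishing on an open subset forces $w_2\equiv 0$ on all of $\Omega_2$. This analyticity step is the missing ingredient in your plan; once you insert it, the rest of your argument goes through.
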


Next, we consider the structure of the velocity field and free surface profile.  Under remarkably general conditions, Wahl\'en \cite{wahlen2014non} proves that for a gravity wave beneath vacuum, if the vorticity is constant, then the flow must be entirely two dimensional: $\boldsymbol{u}_i$ lies in the $xz$-plane and depends only on $(x,z)$, while $\eta = \eta(x)$.  In other words, genuinely three-dimensional steady surface gravity water waves with non-zero constant vorticity do not exist.  Wahl\'en also proves the same holds for capillary-gravity waves provided the velocity field and free surface profile are uniformly bounded in $C^1$, and a Taylor sign condition on the pressure holds.  Earlier work by Constantin \cite{constantin2011two}, Constantin and Kartashova \cite{constantin2009effect}, and Martin \cite{martin2017resonant} obtain analogous results for gravity and capillary-gravity waves under the more restrictive assumption that $\eta$ is periodic, while Martin treats time-dependent \cite{martin2018non} and viscous waves \cite{martin2022flow} again with a Taylor sign condition; see also the survey in \cite{martin2022three}.  The moral of this body of work is that in order to find genuinely three-dimensional steady rotational waves beneath vacuum, one must allow for a more complicated vorticity distribution.  

However, constant vorticity internal waves are not obliged to be two dimensional.  Indeed, a little thought readily leads us to a profusion of explicit three-dimensional solutions to \eqref{full problem}.   Assume that we are in the the Boussinesq setting where $\rho_1 = \rho_2$.  Then, taking 
\be \label{trivial shear}
	\boldsymbol{u}_i  = (\beta_i z + k_i, 0, 0) \quad \textrm{for } i = 1,2, \qquad
	\eta = H(y)
\ee
gives a steady wave for any $H \in C^1(\mathbb{R}; (-h_1,h_2))$. Note that the corresponding vorticity vectors $\boldsymbol{\omega}_i = (0,\beta_i, 0)$ are parallel.  We can visualize \eqref{trivial shear} as two shear flows defined in $\overline{\domain}$, which when $\rho_1 = \rho_2$ will have the same (hydrostatic) pressure.  Any streamline in the $xz$-plane can be viewed as a material interface above which we have the first fluid and below the second.  When $v_1, v_2 \equiv 0$, we can smoothly vary which streamline is the interface as we change $y$, permitting there to be three-dimensional structure.  Essentially, the difference between the situation here and the one-layer case lies in the dynamic condition \eqref{dynamic on interface}.  When the fluid is bounded above by vacuum, the pressure must be constant along the interface, whereas for internal waves it need only be continuous.

Members of the family of solutions \eqref{trivial shear} can be thought of as \emph{trivially three-dimensional shear flows} when $\eta_y \not\equiv 0$.  Our main theorem shows that they are in fact the only possible configuration for three-dimensional waves with constant parallel vorticity and bounded velocity. 

\begin{theorem}[Rigidity]
\label{rigidity theorem}
Every solution of the internal wave problem \eqref{full problem} for which 
\begin{enumerate}[label=\rm(\roman*)]
\item  $\boldsymbol{\omega}_1$ and $\boldsymbol{\omega}_2$ are constant, parallel, and nonzero, and
\item  $\| \boldsymbol{u}_1 \|_{C^0}$, $\| \boldsymbol{u}_2 \|_{C^0} < \infty$,
\end{enumerate}
is either a trivially three-dimensional shear flow of the form \eqref{trivial shear} or two dimensional.  If $\rho_1 \neq \rho_2$, then the wave is necessarily two dimensional.
\end{theorem}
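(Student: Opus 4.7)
The plan is to carry out a series of kinematic reductions that recast the system as a 2D-in-$(x,z)$ flow with constant $y$-velocities, and then to combine real analyticity with the dynamic interface condition to extract a rigid algebraic identity that pins down the allowed configurations.

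\emph{Reductions and setup.} First apply Theorem~\ref{vorticity theorem} to force both vorticities into the horizontal plane; using the parallelism hypothesis together with the rotational invariance of \eqref{full problem} about the $z$-axis, I may take $\boldsymbol{\omega}_i = (0,\beta_i,0)$ with $\beta_i \neq 0$. Equation \eqref{vorticity eq} then yields $\partial_y\boldsymbol{u}_i \equiv 0$, the componentwise form of \eqref{def vorticity} forces $v_i \equiv c_i$ constant, and \eqref{incompressibility} makes $(u_i,w_i)(x,z)$ an incompressible 2D flow of constant vorticity $\beta_i$. Introducing a stream function $\psi_i$ with $\Delta\psi_i = \beta_i$ and writing $\psi_i = \tfrac{1}{2}\beta_i z^2 + \phi_i$ exhibits $\phi_i$ as harmonic on the horizontal projection $\Omega_i^* \subset \mathbb{R}^2$ of $\Omega_i$, so both $\psi_i$ and $P_i$ are real analytic in $(x,z)$. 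Bernoulli \eqref{B eqn}--\eqref{Bernoulli quantity} yields
\[
P_i(x,z) = \rho_i\bigl[\beta_i\psi_i - \tfrac{1}{2}(|\nabla\psi_i|^2 + c_i^2) - gz\bigr] + \mathrm{const},
\]
and the interface conditions translate to $\partial_x[\psi_i(x,\eta)] = -c_i\eta_y$ and $P_1(x,\eta) = P_2(x,\eta)$.

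\emph{Dichotomy and analytic propagation.} If $\eta_y \equiv 0$, then $\eta = \eta(x)$ and the wave is two dimensional. Otherwise, on any open set where $\eta_y \neq 0$ the map $(x,y) \mapsto (x,\eta(x,y))$ is a local diffeomorphism, so its image $V$ is open in the overlap strip $\Omega_1^* \cap \Omega_2^* = \{(x,z) : \inf_y\eta(x,\cdot) < z < \sup_y\eta(x,\cdot)\}$. The dynamic condition forces $P_1 = P_2$ on $V$, and real analyticity in $(x,z)$ propagates this identity to the connected component of $\Omega_1^* \cap \Omega_2^*$ containing $V$. The resulting identity reads
\begin{align*}
\rho_1\beta_1(\phi_1 - z\phi_{1,z}) &- \rho_2\beta_2(\phi_2 - z\phi_{2,z}) \\
&- \tfrac{1}{2}\bigl(\rho_1|\nabla\phi_1|^2 - \rho_2|\nabla\phi_2|^2\bigr) - (\rho_1 - \rho_2)gz + C = 0
\end{align*}
on the overlap strip, and applying $\Delta$ eliminates the affine-in-$z$ part to yield the companion relation $\rho_1|\nabla^2\psi_1|^2 - \rho_2|\nabla^2\psi_2|^2 = \rho_1\beta_1^2 - \rho_2\beta_2^2$ on the same strip.

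\emph{Rigidity extraction and conclusion.} The main remaining step is to combine the identity above and its derivatives with (i) boundedness of $\nabla\phi_i$ (inherited from $\|\boldsymbol{u}_i\|_{C^0} < \infty$) and (ii) the rigid-wall relations $\phi_{1,x}|_{z=-h_1} \equiv 0$ and $\phi_{2,x}|_{z=h_2} \equiv 0$, to conclude that $\phi_{i,x} \equiv 0$ throughout $\Omega_i^*$. Once this is established, harmonicity forces $\phi_i = a_i z + b_i$, so $w_i \equiv 0$ and $u_i = \beta_i z + a_i$, i.e., the flow is a pure shear in each layer. Substituting this shear form back into the identity collapses it to $-(\rho_1 - \rho_2)gz + \mathrm{const} = 0$ on a $z$-interval, forcing $\rho_1 = \rho_2$; hence when $\rho_1 \neq \rho_2$ the case $\eta_y \not\equiv 0$ cannot occur and the wave is two dimensional. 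In the Boussinesq case, the kinematic condition collapses to $(\beta_i z + a_i)\eta_x + c_i\eta_y = 0$ on $z = \eta$ for both $i=1,2$; since $\eta_y \not\equiv 0$ and $\beta_i \neq 0$, consistency of the two relations forces $c_i = 0$ and $\eta_x \equiv 0$, so $\eta = H(y)$, exactly recovering \eqref{trivial shear}. The principal obstacle is the rigidity extraction step: converting the nonlinear identity coupling the two separately harmonic functions $\phi_1, \phi_2$—together with the bounded-gradient and wall constraints—into the pointwise rigidity $\phi_{i,x} \equiv 0$. I expect this to require a careful combination of Liouville-type arguments (perhaps via Schwarz reflection through the rigid walls) for bounded-gradient harmonic functions on horizontal strips with the algebraic content of the identity and its $x$- and $z$-derivatives, and this is where the genuinely new two-layer analysis, beyond Wahl\'en's one-layer rigidity, must do its work.
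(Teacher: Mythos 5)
Your reductions (horizontal parallel vorticity via Theorem~\ref{vorticity theorem}, constancy of $v_i$, $y$-independence of $\boldsymbol{u}_i$ and $P_i$, projection to the $xz$-strip, and analytic propagation of the identity $P_1=P_2$ across the overlap of the projected domains) all match the paper and are correct. But the step you yourself flag as ``the principal obstacle'' --- converting the pressure identity into $\phi_{i,x}\equiv 0$ --- is the entire content of the theorem, and it is not carried out. More importantly, the route you propose for it is missing the key ingredient: you plan to use only the pressure identity, boundedness, and the rigid-wall conditions, whereas the paper's proof hinges on exploiting the \emph{kinematic} condition on the overlap region $\tilde\Omega_1\cap\tilde\Omega_2$. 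There, every point lies on the image of the interface for some $y$, so the relations $V_i\,\eta_y = \partial_x\bigl(\tilde\psi_i(x,\eta(x,y))\bigr)$ force a functional relation between the two stream functions: when $V_1=V_2=0$ the streamlines of the two flows coincide (vanishing Poisson bracket, hence $\tilde\psi_1=\Lambda(\tilde\psi_2)$ locally), and when some $V_i\neq 0$ an integration-in-$x$ argument combined with boundedness shows $V_2\tilde\psi_1-V_1\tilde\psi_2$ is constant on an open set. It is this relation, fed through $\Delta\tilde\psi_i+\beta_i=0$, Bernoulli, and a Phragm\'en--Lindel\"of argument at the walls, that yields hydrostatic pressure. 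Without it, deducing rigidity from the pressure identity alone is essentially the open ``pressure reconstruction'' problem the authors pose in their discussion section (two Euler flows with unknown densities and vorticities sharing pressure data on an open set); there is no reason to expect a bare Liouville/reflection argument for the coupled nonlinear identity in $\phi_1,\phi_2$ to close.

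Two smaller points. First, your companion relation obtained by applying $\Delta$ is algebraically correct but is not obviously strong enough on its own; the paper never needs it. Second, in the Boussinesq endgame your claim that ``consistency of the two kinematic relations forces $c_i=0$ and $\eta_x\equiv 0$'' is too quick: when $V_1\neq 0$ the equation $(\beta_1\eta+k_1)\eta_x+V_1\eta_y=0$ is a Burgers equation in $y$, and one must invoke the fact that its only \emph{global} classical solutions are constants (so $\eta$ is flat and the wave is two dimensional), while $V_1=0$ is the case that produces the genuinely three-dimensional family $\eta=H(y)$; your argument conflates these and would wrongly suggest $c_i=0$ always. As it stands, the proposal is an accurate restatement of the problem in projected variables together with a correct treatment of the easy implications, but the core rigidity mechanism is absent.
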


One can interpret this theorem as the statement that the solutions to \eqref{full problem} inherit the symmetry of the channel domain in which the problem is posed.  Related rigidity results for the two-dimensional Euler equations have been obtained by Hamel and Nadirashvili \cite{hamel2017shear,hamel2019liouville}, who prove that all solutions in a strip, half plane, or the whole plane with no stagnation points 
are shear flows (that is, the vertical velocity vanishes identically and the horizontal velocity depends only on $z$). Under the same no-stagnation assumption, these authors also find that steady Euler configurations confined to circular domains must be radially symmetric \cite{hamel2021circular}. Allowing the presence of stagnation points, G\'omez-Serrano, Park, Shi, and Yao \cite{gomez2021symmetry} show that smooth stationary solution with compactly supported and nonnegative vorticity must be radial. In Theorem~\ref{rigidity theorem} we avoid making any restrictions on the velocity beyond boundedness, though we only treat the constant vorticity case.  Notably, as in \cite{wahlen2014non}, we make no a priori assumptions on the far-field behavior of the wave.  Thus in the non-Boussinesq case, nontrivial solitary waves, periodic waves, fronts --- and all other more exotic waveforms --- are excluded all at once.  We also mention that it is possible to rule out capillary-gravity internal waves through arguments similar to the one-fluid regime; see Theorem~\ref{capillary theorem}. 

The idea of the proof can be explained as follows.  Thanks to Theorem~\ref{thm vorticity}, when $\boldsymbol{\omega}_1$ and $\boldsymbol{\omega}_2$ are parallel, the velocity fields are two-dimensional: $\boldsymbol{u}_i = (u_i(x,z), V_i, w_i(x,z))$ where $V_i$ are constants.  The same also holds for the pressures, but a priori $\eta$ may depend on both $(x,y)$.  If the interface is not independent of $y$, then the projections $\tilde \domain_i$ of $\domain_i$ into the $xz$-plane will have non-empty intersection with non-empty interior, and on that set we have two solutions of the two-dimensional Euler equations.  Because each point in $\tilde \domain_1 \cap \tilde \domain_2$ corresponds to one or more points on the interface, the dynamic condition applies throughout.  The key insight of Wahl\'en is that, for waves beneath vacuum, this forces the pressure to be constant, and hence by analyticity, it is constant throughout the fluid.  As this is not possible, he concludes that for surface waves, the interface must be flat in $y$.  For internal waves, however, the dynamic condition tells us instead that there exists a pressure $P = P(x,z)$ that is real analytic on $\tilde \domain_1 \cup \tilde \domain_2$ and whose restriction to $\tilde\domain_1$ is $P_1$ and whose restriction to $\tilde \domain_2$ is $P_2$.

The central question therefore turns to one of uniqueness of steady solutions of the two-dimensional Euler equations with a prescribed pressure, but allowing for potentially different densities and different constant vorticities.  We have in addition that the kinematic condition \eqref{kinematic on interface} holds on the intersection region, which forces a relation between the slopes of the two velocity fields there.  Through a novel but elementary argument, we prove that the streamlines (integral curves) of the vector fields $(u_1, w_1)$ and $(u_2, w_2)$ coincide on $\tilde \Omega_1 \cap \tilde \Omega_2$.  Finally, from the real analyticity of the velocity and pressure and Liouville's theorem, we are ultimately able to conclude that the pressure must be hydrostatic, and thus the wave is of the form \eqref{trivial shear}.  

\section{Dimension reduction for the vorticity}

This section is devoted to the proof of Theorem~\ref{vorticity theorem} on the two-dimensionality of the vorticity.  As mentioned above, it is based on the corresponding work of Wahl\'en in \cite{wahlen2014non}.    The main point is that this argument relies on the structure of the velocity field near the rigid walls, which is not substantially different in the two-fluid setting. 

A key observation, both for the present section and the next, is that each component of the velocity is harmonic:
\be
\label{harmonic velocity}
	\Delta u_i = \Delta v_i = \Delta w_i = 0 \qquad \textrm{in } \domain_i.
\ee
This follows simply by taking the curl of equation \eqref{def vorticity} and using incompressibility \eqref{incompressibility}.  As just one important consequence, $u_i$, $v_i$, and $w_i$ are all real-analytic functions.  Taking the divergence of the momentum equation \eqref{momentum equation}, we likewise find that the pressure $P_i$ solves a Poisson equation with real-analytic forcing, and hence it too is real analytic.  These facts will be crucial to our analysis at several points.  In particular, they provide a means to globalize identities that hold on open subsets to the entirety of the fluid domain.

\begin{proof}[Proof of Theorem~\ref{thm vorticity}]
Seeking a contradiction, suppose that one of $\gamma_i$ is not zero, say, $\gamma_1 \ne 0$; the argument for the other case $\gamma_2 \ne 0$ can be treated the same way. Then from the third component of the vorticity equation \eqref{vorticity eq} we see that $w_1$ is constant in the direction of $\boldsymbol{\omega}_1$, which is transverse to the lower boundary at $ z = - h_1$. From the kinematic condition \eqref{kinematic on rigid}, it follows that $w_1$ vanishes identically on the open neighborhood $\mathcal{N} := \{ (x,y,z) : -h_1 < z < \inf\eta \}$ of the bed. As it is real analytic, this forces
\[
	w_1 \equiv 0 \quad \text{in }\ \domain_1.
\]
Reconciling this with \eqref{const vorticity}, \eqref{incompressibility} and \eqref{momentum equation}, we then have 
\begin{align}
& \partial_z u_{1} = \beta_1, \quad \partial_z v_{1} = -\alpha_1,\label{reduced uv_z} \\
& \partial_x u_{1} + \partial_y v_{1}  = 0, \label{reduced incomp}\\
& \partial_z P_{1} = -\rho_1 g \label{reduced P_z}
\end{align}
in $\domain_1$.  By integrating \eqref{reduced uv_z}, we infer that
\be\label{reduced uv}
u_1 = \bar u_1(x, y) + \beta_1 z, \qquad v_1 = \bar v_1(x, y) - \alpha_1 z,
\ee
in $\mathcal{N}$ for some functions $\bar u_1$ and $\bar v_1$. The reduced incompressibility condition \eqref{reduced incomp} then implies that 
\[
	\partial_x \bar u_{1} + \partial_y \bar v_{1} = 0,
\]
  which ensures the existence of a reduced stream function $\bar \psi_1 = \bar \psi_1(x, y)$ defined on $\mathcal{N}$ such that $\nabla^\perp \bar\psi_1 = (-\partial_z \bar\psi,\partial_x \bar\psi) = (\bar u_1, \bar v_1)$.  From the horizontal momentum equations \eqref{momentum equation} and the form of the pressure obtained in \eqref{reduced P_z}, we see in $\mathcal{N}$, $\bar \psi_1$ satisfies
\be\label{reduced momentum eqn}
\left\{ 
\begin{split}
	\beta_1 \partial_x \partial_y \bar\psi_{1} - \alpha_1 \partial_y^2 \bar\psi_{1} & = 0 \\
	-\beta_1 \partial_x^2 \bar\psi_{1} + \alpha_1 \partial_x \partial_y\bar\psi_{1} & = 0 \\
	\Delta \bar\psi_1 + \gamma_1 & = 0
\end{split}
\right.
\qquad \textrm{in } \mathcal{N}.
\ee
We consider two cases.

{\bf Case 1: $\alpha_1^2 + \beta_1^2 = 0$.} From \eqref{reduced uv_z} and \eqref{const vorticity} it follows that
\be\label{vanishing uv_z}
\partial_z u_{1} = \partial_z v_{1} = 0, \qquad \partial_x v_{1} - \partial_y u_{1} = \gamma_1.
\ee
We also find from \eqref{reduced uv} and \eqref{harmonic velocity} that in the neighborhood $\mathcal{N}$,  $u_1 = \bar u_1$ and $v_1 = \bar v_1$ are harmonic functions with domain $\R^2$. The boundedness of $\boldsymbol{u}_1$, and thus the boundedness of $(\bar u_1, \bar v_1)$, allows one to appeal to the Liouville theorem for harmonic functions to conclude that $u_1$ and $v_1$ are constants. However this contradicts that fact that $\gamma_1 \ne 0$.

{\bf Case 2: $\alpha_1^2 + \beta_1^2 \ne 0$.} In this case, direct computation from \eqref{reduced momentum eqn} yields that the second-order derivatives of $\bar\psi_1$ are all constant:
\be\label{soln psi}
	\partial_x^2 \bar\psi_{1} = -{\alpha_1^2 \gamma_1 \over \alpha_1^2 + \beta_1^2} =: A_1, \quad \partial_x \partial_y \bar\psi_{1} = -{\alpha_1\beta_1\gamma_1 \over \alpha_1^2 + \beta_1^2} =: B_1, \quad \partial_y^2 \bar\psi_{1} = -{\beta_1^2\gamma_1 \over \alpha_1^2 + \beta_1^2} =: C_1, 
\ee
from which one can solve for $\bar u_1$ and $\bar v_1$
\[
	\bar u_1 = B_1 x + C_1 y + a_1, \qquad \bar v_1 = -A_1 x - B_1 y + b_1
\]
for some constants $a_1$ and $b_1$. Thus
\be\label{soln uv}
u_1 = B_1 x + C_1 y + \beta_1 z  + a_1, \qquad v_1 = -A_1 x - B_1 y - \alpha_1 z + b_1.
\ee
Again boundedness of $\boldsymbol{u}_1$ forces $A_1 = B_1 = C_1 = 0$, leading to $\alpha_1 = \beta_1 = 0$, a contradiction.
\end{proof}

\section{Rigidity of internal waves}

The purpose of this section is to prove the rigidity result in Theorem~\ref{rigidity theorem}, characterizing three-dimensional internal waves with constant vorticity.  Recall that we have, without loss of generality, chosen axes so that $\boldsymbol{\omega}_2$ lies in the $xz$-plane.   Theorem \ref{thm vorticity} then guarantees that the vorticity in each layer takes the form
\be\label{vorticity form}
\boldsymbol{\omega}_1 = (\alpha_1, \beta_1, 0), \qquad \boldsymbol{\omega}_2 = (0, \beta_2, 0).
\ee
Note that the assumption $\boldsymbol{\omega}_1$ and $\boldsymbol{\omega}_2$ are parallel is equivalent to $\alpha_1 = 0$.  More generally, though, the particularly simple form of $\boldsymbol{\omega}_2$ allows one us further characterize the flow pattern in the upper layer. 
\begin{lemma}\label{lem y indep}
Let the assumptions of Theorem \ref{thm vorticity} hold. Then, $\boldsymbol{u}_2$ and $P_2$ are independent of $y$, and $v_2$ is constant.  Likewise, $\boldsymbol{u}_1$ and $P_1$ are constant along lines parallel to $\boldsymbol\omega_1$, while $\alpha_1 u_1 + \beta_1 v_1$ is constant.
\end{lemma}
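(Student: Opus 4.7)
My plan is to handle the two layers separately, exploiting the particularly simple form of $\boldsymbol{\omega}_2 = (0,\beta_2,0)$ in the upper layer and then working harder for the lower layer, where $\boldsymbol{\omega}_1 = (\alpha_1,\beta_1,0)$ has both horizontal components.  Note that $\beta_2 \neq 0$ and $\alpha_1^2 + \beta_1^2 > 0$ since $\boldsymbol{\omega}_i \neq 0$ and both $\gamma_i$ vanish by Theorem~\ref{thm vorticity}.

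For the upper layer, the vorticity equation \eqref{vorticity eq} immediately gives $\beta_2 \partial_y \boldsymbol{u}_2 = 0$ in $\Omega_2$, so $\boldsymbol{u}_2$ is independent of $y$.  Feeding this into the $x$ and $z$ components of the vorticity definition \eqref{def vorticity} yields $-\partial_z v_2 = 0$ and $\partial_x v_2 = 0$ respectively, and together with $\partial_y v_2 = 0$ we conclude that $v_2$ is a constant.  To handle $P_2$, I would take the Bernoulli identity \eqref{B eqn}: since $\boldsymbol{u}_2 \times \boldsymbol{\omega}_2 = (-\beta_2 w_2, 0, \beta_2 u_2) = \nabla H_2$, we get $\partial_y H_2 = 0$, and then $y$-independence of $\boldsymbol{u}_2$ and $gz$ forces $P_2$ to be independent of $y$ as well through the definition \eqref{Bernoulli quantity}.

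For the lower layer, the vorticity equation \eqref{vorticity eq} states that $(\boldsymbol{\omega}_1 \cdot \nabla)\boldsymbol{u}_1 = 0$, which says exactly that $\boldsymbol{u}_1$ is constant along (horizontal) lines parallel to $\boldsymbol{\omega}_1$.  To extend this to $P_1$, I would dot \eqref{B eqn} with $\boldsymbol{\omega}_1$: the triple product $\boldsymbol{\omega}_1 \cdot (\boldsymbol{u}_1 \times \boldsymbol{\omega}_1)$ vanishes, so $(\boldsymbol{\omega}_1 \cdot \nabla) H_1 = 0$; since $\boldsymbol{u}_1$ and $gz$ are both constant along these horizontal $\boldsymbol{\omega}_1$-lines, the same must hold for $P_1$.

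The final assertion that $\alpha_1 u_1 + \beta_1 v_1$ is a single constant is the delicate part, and I would argue as follows.  Using the $x$- and $y$-components of \eqref{def vorticity} to substitute $\partial_z u_1 = \beta_1 + \partial_x w_1$ and $\partial_z v_1 = \partial_y w_1 - \alpha_1$, a short calculation gives
\[
\partial_z(\alpha_1 u_1 + \beta_1 v_1) = \alpha_1 \partial_x w_1 + \beta_1 \partial_y w_1 = (\boldsymbol{\omega}_1 \cdot \nabla) w_1 = 0
\]
by the third component of \eqref{vorticity eq}.  Combined with the $\boldsymbol{\omega}_1$-invariance from the previous paragraph, the quantity $\alpha_1 u_1 + \beta_1 v_1$ depends only on the transverse horizontal coordinate $s := -\beta_1 x + \alpha_1 y$, say $\alpha_1 u_1 + \beta_1 v_1 = g(s)$.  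Since $u_1$ and $v_1$ are harmonic by \eqref{harmonic velocity}, so is $g(s)$, which forces $(\alpha_1^2 + \beta_1^2) g''(s) = 0$ wherever $g$ is defined; because $-h_1 < \eta$ everywhere, $\Omega_1$ projects onto all of $\mathbb{R}^2$, so $g$ is affine on all of $\mathbb{R}$, and boundedness of $\boldsymbol{u}_1$ forces $g$ to be constant.  The main obstacle I anticipate is precisely this last step, namely confirming that the one-variable reduction is valid on the entire line so that the Liouville-type argument applies globally, which is where the bounded-velocity hypothesis does its work.
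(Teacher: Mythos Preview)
Your overall strategy is sound, and your explicit treatment of $\alpha_1 u_1 + \beta_1 v_1$ via the identity $\partial_z(\alpha_1 u_1 + \beta_1 v_1) = (\boldsymbol{\omega}_1\cdot\nabla)w_1 = 0$, harmonicity, and Liouville is a pleasant direct argument; the paper merely says the lower layer ``follows through essentially the same reasoning,'' which amounts to rotating coordinates so that $\boldsymbol{\omega}_1$ lies along the new $y'$-axis and then rerunning the upper-layer proof verbatim.

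There is, however, one genuine gap that you gloss over in both layers.  From $(\boldsymbol{\omega}_2\cdot\nabla)\boldsymbol u_2 = \beta_2\,\partial_y\boldsymbol u_2 = 0$ you immediately write ``so $\boldsymbol u_2$ is independent of $y$,'' and similarly you pass from pointwise vanishing of the $\boldsymbol{\omega}_1$-directional derivative to global constancy along $\boldsymbol{\omega}_1$-lines.  This inference is \emph{not} automatic: for fixed $(x,z)$ the fiber $\{y : (x,y,z)\in\Omega_2\} = \{y : \eta(x,y) < z\}$ can be disconnected whenever $\eta(x,\placeholder)$ oscillates across the level $z$, so $\partial_y f = 0$ pointwise does not by itself force $f(x,y_1,z)=f(x,y_2,z)$.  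The paper closes this gap with an analyticity argument (following Wahl\'en): near the rigid lid the entire horizontal slab $\{\sup\eta < z < h_2\}$ lies in $\Omega_2$, so $y$-independence holds there; then for any $y_1,y_2$ the real-analytic map $z\mapsto P_2(x,y_1,z)-P_2(x,y_2,z)$ vanishes on an interval and hence all the way down to $\max\{\eta(x,y_1),\eta(x,y_2)\}$.  The same device is needed for $\boldsymbol u_2$, and an analogous one (using the slab $\{-h_1 < z < \inf\eta\}\subset\Omega_1$) for the lower layer before you can legitimately write $\alpha_1 u_1 + \beta_1 v_1 = g(s)$.  Once this is inserted, your argument goes through.
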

\begin{proof}
We will only present the argument for the upper fluid as the lower fluid follows through essentially the same reasoning.  From \eqref{vorticity form}, \eqref{const vorticity} and \eqref{vorticity eq} it follows that
\[
	\partial_y u_{2} = \partial_y v_{2} = \partial_y w_{2} = 0, \qquad \partial_x v_{2} = \partial_z v_{2} = 0.
\]
In particular, $\nabla v_2 = 0$, and thus $v_2$ is a constant throughout $\domain_2$. The momentum equation in $y$-directional momentum equation then becomes
\[
	\partial_y P_{2} = 0 \qquad \textrm{in } \domain_2.
\]
Following the argument as in \cite[Lemma 3]{wahlen2014non} using the real analyticity of $P_2$ we can show that $P_2$ is independent of $y$ in the upper fluid layer $\domain_2$. In fact we see that $P_2$ is independent of $y$ in a region sufficiently close to the top boundary $\{ z = h_2 \}$. Therefore for any $y_1 \ne y_2$, there exists a minimal $z_* \le h_2$ such that  
\[
P_2(x, y_1, z) = P_2(x, y_2, z) \quad \text{for} \quad z_* \le z \le h_2.
\]
Clearly we know that $z_* \ge \max\{ \eta(x,y_1), \eta(x,y_2) \}$. Using the real analyticity of $z \mapsto P_2(x, y_1, z) - P_2(x, y_2, z)$ we see that $z_* = \max\{ \eta(x,y_1), \eta(x,y_2) \}$, which indicates that $P_2$ is independent of $y$ in $\domain_2$. The result for $(u_2, w_2)$ follows in a similar way.
\end{proof}

Let us now proceed to the proof of the main result. 

\begin{proof}[Proof of Theorem~\ref{rigidity theorem}]
Thanks to Lemma~\ref{lem y indep} and the assumption that $\boldsymbol{\omega}_1$ and $\boldsymbol{\omega}_2$ are parallel and non-vanishing, we have that $v_1$ and $v_2$ are constants; let them be denoted $V_1$ and $V_2$, respectively. Moreover, $\boldsymbol u_1, P_1, \boldsymbol u_2$, and $P_2$ are independent of $y$, so we  can write 
\[
	\begin{aligned}
		\boldsymbol{u}_1(x,y,z) & = \tilde{\boldsymbol{u}}_1(x,z), & \quad P_1(x, y, z) & = \tilde P_1(x,z), \\
		\boldsymbol{u}_2(x,y,z) & = \tilde{\boldsymbol{u}}_2(x,z), & \quad P_2(x, y, z) & = \tilde P_2(x,z),
	\end{aligned}
\]
where $\tilde{\boldsymbol{u}}_i$ and $\tilde P_i$ are defined on the projection 
\be\label{proj domain}
\tilde\Omega_i := \left\{ (x, z):\ (x, y, z) \in \Omega_i \text{ for some } y \in \R \right\},
\ee
of $\Omega_i$ on the $xz$-plane, for $i = 1, 2$. It is easy to see that in fact
\be\label{proj domain exp}
\tilde\Omega_1 = \left\{ (x, z):\ -h_1 < z < f_1(x) \right\}, \quad \tilde\Omega_2 = \left\{ (x,z):\ f_2(x) < z < h_2 \right\},
\ee
where
\[
f_1(x) := \sup_{y\in \R} \eta(x,y), \quad \text{and} \quad f_2(x) := \inf_{y\in \R} \eta(x,y).
\]
By definition $f_2(x) \le f_1(x)$. The boundedness of $\eta$ implies that $-h_1 < f_1(x) \le h_2$ and $-h_1 \le f_2(x) < h_2$. It is elementary that  $f_1$ is then lower semicontinuous while $f_2$ is upper semicontinuous. The projected planes $\tilde\Omega_i$ are both open and connected subsets of $\R^2$, for $i = 1,2$. 

Arguing by contrapositive, suppose that $\eta_y \not\equiv 0$.  Then $\tilde\Omega_1 \cap \tilde\Omega_2 \ne \emptyset$ and there exists some point $(x_0, y_0)$ such that $z_0 := \eta(x_0, y_0) \in (f_2(x_0), f_1(x_0))$. The dynamic boundary condition \eqref{dynamic on interface} yields
\[
\tilde P_1(x_0, z_0) = \tilde P_2(x_0, z_0).
\]
A continuity argument implies that for each $z$ between $z_0$ and $f_1(x_0)$ there exists some $y(z)$ such that $z = \eta(x_0, y(z))$. Therefore on the line segment joining $(x_0, z_0)$ and $(x_0, f_1(x_0))$ we have
\[
\tilde P_1(x_0, z) = \tilde P_2(x_0, z).
\]
See Figure \ref{projection figure}. 
\begin{figure}
  \centering
  \includegraphics[page=2,scale=1]{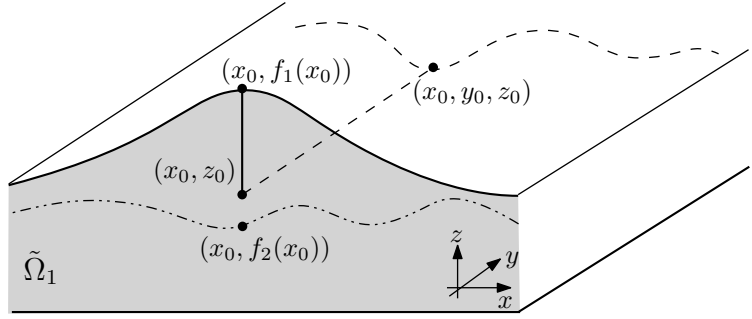}
  \vspace{-2ex}
  \caption{Projection to $\tilde\Omega_1$. 
  \label{projection figure} }  
\end{figure}
Now from the lower semicontinuity of $f_1$ and the upper semicontinuity of $f_2$ we know that for $x$ sufficiently close to $x_0$ it holds that $f_2(x) < \eta(x, y_0) < f_1(x)$. Repeating the previous argument it follows that there exists an open subset of $\tilde\Omega_1$ in which $\tilde P_1(x, z) = \tilde P_2(x, z)$. The analyticity of $\tilde P_i$ then forces $\tilde P_1 = \tilde P_2$ on $\tilde\Omega_1 \cap \tilde\Omega_2$, and thus $\tilde P_1$ and $\tilde P_2$ are analytic extensions of each other in the entire strip $\tilde\Omega := \left\{ (x,z):\ -h_1 < z < h_2 \right\}$.

Recall that we say the pressure in $\tilde\Omega_i$ is \emph{hydrostatic} provided $\nabla (\tilde P_i + \rho_i z)$ vanishes identically. Suppose that either $\tilde P_1$ or $\tilde P_2$ is hydrostatic. Then uniqueness of the analytic extension implies both are hydrostatic and hence $\rho_1 = \rho_2$. The incompressibility of $\tilde{\boldsymbol{u}}_1$ and $\tilde{\boldsymbol{u}}_2$ permit us to define stream functions $\tilde \psi_i$ by $\nabla^\perp \tilde \psi_i := (\tilde u_i, \tilde w_i)$. The Bernoulli equation \eqref{B eqn} now reads
\[
\beta_i(-\tilde w_i, 0, \tilde u_i) = \nabla \left[ \frac{1}{2}({\tilde u_i}^2 + V_i^2 + {\tilde w_i}^2) \right] \quad \textrm{in } \Omega_i,
\]
which in turn leads to
\[
\frac{1}{2} \left( \tilde u_i^2 + \tilde w_i^2\right) - \beta_i \tilde \psi_i = Q_i \quad \textrm{in } \tilde\Omega_i
\]
for some constant $Q_i$.  On the bed, $\tilde w_1 \equiv 0$ and $\tilde \psi_1$ is constant, and so $\tilde u_1$ is likewise constant there.  Thus, 
\[
	\tilde w_{1z} = \tilde u_{1x} = 0 \qquad \textrm{on } \{ z = -h_1 \},
\]
and, because $\tilde w$ is harmonic, it must therefore be that $\tilde w_1 \equiv 0$ in $\tilde\Omega_1$.  The same argument applied on the lid shows that $\tilde w_2 \equiv 0$ in $\tilde\Omega_2$.  Incompressibility then implies that $\tilde u_1 = U_1(z)$ and $\tilde u_2 = U_2(z)$, meaning we have a shear flow. The constant vorticity then forces $\tilde u_i = \beta_i z + k_i$ as in \eqref{trivial shear}.

Evaluating the kinematic condition using this fact gives
\[
	(\beta_1 \eta + k_1) \eta_x + V_1 \eta_y = 0 \qquad \textrm{for all } (x,y) \in \mathbb{R}^2.
\]
If $V_1 \neq 0$, this is Burgers' equation with $y$ playing the role of the evolution variable.   Because the only global classical solutions are constants, this forces the interface to be perfectly flat.  On the other hand, if $V_1 = 0$, we can simply integrate the equation in $x$ to see that $\eta(\placeholder, y)$ is likewise constant.  In either case, then, the wave is completely shear with no variation in the $x$-direction.

As the converse of these inferences is obviously true, the conclusions of the previous two paragraphs can be stated succinctly as:
\begin{equation}
  \label{hydrostatic iff trivial}
  \text{$\tilde P_1$ or $\tilde P_2$ hydrostatic} ~ \Longleftrightarrow ~  \text{$\tilde P_1$ and  $\tilde P_2$ hydrostatic} ~ \Longleftrightarrow ~ \left\{ \begin{aligned}  
    \tilde u_1 & = \beta_1 z + k_1, & \tilde w_1 \equiv 0, \\
    \tilde u_2 & = \beta_2 z + k_2, & \tilde w_2 \equiv 0, \\
     \eta & = H(y), & \\
    \rho_1 & = \rho_2. & \end{aligned} \right.
\end{equation}
Our goal in the remainder of the proof is therefore to show that at least one of $\tilde P_1$ and $\tilde P_2$ is hydrostatic.

The  kinematic condition in the projected domain states that
\[
	\left\{
	\begin{aligned}
		\tilde u_1(x, z) \eta_x(x, y) + V_1 \eta_y(x,y) & = \tilde w_1(x, z) \\
		\tilde u_2(x, z) \eta_x(x, y) + V_2 \eta_y(x,y)  & = \tilde w_2(x, z),
	\end{aligned}
	\right.
\]
where $(x,z) \in \tilde\Omega_1 \cap \tilde\Omega_2$ and $y$ is any point such that $z = \eta(x, y)$.  Observe that this can be rewritten in terms of the stream functions as
\be \label{kinematic system psi}
		\left\{
	\begin{aligned}
		V_1 \eta_y(x,y) & = \partial_x \left( \tilde \psi_1(x,\eta(x,y)) \right) \\
		V_2 \eta_y(x,y) & = \partial_x \left( \tilde \psi_2(x,\eta(x,y)) \right).
	\end{aligned}
	\right.
\ee

Let us look at two possibilities.  First suppose that $V_1 = V_2 = 0$.  Thus from \eqref{kinematic system psi}, we see that each graph $\eta(\placeholder, y)$ is a streamline for both $\tilde{\boldsymbol{u}}_1$ and $\tilde{\boldsymbol{u}}_2$. It follows that the Poisson bracket of $\tilde\psi_1$ and $\tilde \psi_2$ vanishes identically in $\tilde \Omega_1 \cap \tilde \Omega_2$.  By real analyticity, the zero-set of $|\nabla \tilde \psi_1|^2 |\nabla \tilde \psi_2|^2$ is either the entirety of $\tilde \Omega_1\cap \tilde \Omega_2$ or a closed, nowhere dense subset.  In the first case, we would of course have that the flow is hydrostatic, so assume that the latter is true.  Then we can find an open set $\mathcal{U} \subset \tilde \Omega_1 \cap \tilde \Omega_2$ on which $|\nabla \tilde \psi_1|, |\nabla \tilde \psi_2| \neq 0$.  It follows that there exists some real-analytic function $\Lambda$ such that
$\psi_1 = \Lambda(\psi_2)$ on $\mathcal{U}$.  Taking the Laplacian of both sides then gives the identity
\[
	\beta_1 = \Lambda^{\prime\prime}(\tilde\psi_2) |\nabla \tilde\psi_2|^2  + \Lambda^\prime(\tilde \psi_2) \beta_2 \qquad \textrm{on } \mathcal{U}.
\]

We see then that either $\Lambda^{\prime\prime}(\tilde\psi_2) \equiv 0$, or else $|\nabla \tilde \psi_2|^2$ is constant along the streamlines in some open subset $\mathcal{V} \subset \mathcal{U}$.  In the first case, $\lambda := \Lambda^\prime(\tilde \psi_2)$ is constant on $\mathcal{V}$, and so by real analyticity, $(\tilde u_1, \tilde w_1) = \lambda (\tilde u_2, \tilde w_2)$ on all of $\mathcal{U}$.  We can thus extend $\tilde u_1$ and $\tilde w_1$ as real-analytic (indeed, harmonic) functions defined on the entire closure of $\tilde\Omega$ with $\tilde u_1 = \lambda \tilde u_2$ and $\tilde w_1 = \lambda \tilde w_2$ on $\tilde\Omega_2$.   The Phragm\'en--Lindel\"of principle and boundary conditions then force $\tilde w_1 \equiv 0$, so by incompressibility $\tilde u_{1x} \equiv 0$.  Thus $\tilde P_1$ is hydrostatic, and we can appeal to \eqref{hydrostatic iff trivial} to show that the wave is trivial.   

Assume next that $|\nabla \tilde \psi_2|^2$ is constant along the streamlines in $\mathcal{V}$.  Bernoulli's law then implies that the dynamic pressure $p_2 := \tilde P_2 - \rho_2 g z$ is also constant along the streamlines in $\mathcal{V}$, that is, $\nabla p_2 \cdot \nabla^\perp \tilde\psi_2 = 0$ in $\mathcal{V}$. By construction, $\nabla^\perp \tilde\psi_2 = (\tilde u_2, \tilde w_2)$ has no stagnation points in $\tilde\Omega_2$. So by analyticity we have $\nabla p_2 \cdot \nabla^\perp \tilde\psi_2 = 0$ in $\tilde\Omega_2$. In particular, $p_2$, and thus $\tilde P_2$, is constant on $z = h_2$, which by the argument above forces $\tilde P_2$ to be hydrostatic. 

Next consider the situation where at least one of $V_1$ and $V_2$ is non-vanishing; for definiteness, say $V_1 \neq 0$.  Unlike the previous case, the graphs of $\eta(\placeholder, y)$ are no longer streamlines, however \eqref{kinematic system psi} implies that for any $y \in \mathbb{R}$, 
\[
	V_2 \tilde \psi_1 - V_1 \tilde \psi_2 \quad \textrm{is constant on } \{ \eta(x,y) : x \in \mathbb{R} \}.
\]
As we have assumed $\eta_y \not\equiv 0$, we may let $(x_0,z_0) \in \tilde \Omega_1 \cap \tilde \Omega_2$ be given such that $z = \eta(x_0, y_0)$ and $\eta_y(x_0, y_0) \neq 0$.  Let $(a,b)$ be an open interval containing $y_0$ on which $\eta(x_0, \placeholder)$ is monotone.  Integrating the kinematic condition \eqref{kinematic system psi} from $x = x_0$ to $x = M$ and from $y = a$ to $y = b$ gives
\begin{align*}
	V_1 \int_{x_0}^M \left( \eta(x,b) - \eta(x, a) \right) \, dx & = \int_{x_0}^M  \int_a^b \partial_x \left( \tilde \psi_1(x, \eta(x,y)) \right) \, dy \, dx \\
		& = \int_a^b \tilde \psi_1(M, \eta(M, y)) \, dy - \int_a^b \tilde \psi_1(x_0, \eta(x_0, y)) \, dy.
\end{align*}
The right-hand side above is bounded uniformly in $M$ since
\[
	\left| \int_a^b \tilde \psi_1(M, \eta(M, y)) \, dy \right|\leq (b-a) \| \tilde \psi_1 \|_{C^0} \lesssim \| \tilde u_1 \|_{C^0}.
\]
Therefore, we must have that $\inf_{x \geq x_0} |\eta(x,b)-\eta(x,a)| = 0$, as otherwise, the left-hand side integral would diverge as $M \to \infty$.  That is, the distance between the graphs $\eta(\placeholder, y_1)$ and $\eta(\placeholder, y_2)$ is in fact $0$ for all for $y_1,y_2 \in (a,b)$.  It follows that $V_2 \tilde \psi_1 - V_1 \tilde \psi_2$ is constant in the set $\mathcal{W}$ that is bounded above and below by the graphs of $\eta(\placeholder, b)$ and $\eta(\placeholder, a)$.  But since $\eta_y(x_0,z_0) \neq 0$, the inverse function theorem applied to $(x,y) \mapsto (x, \eta(x,y))$ ensures that some open neighborhood $\mathcal{U} \ni (x_0,z_0)$ lies in the interior of $\mathcal{W}$. 

From here, it is easy to see that the flow must be hydrostatic.  If $V_2 = 0$, by analyticity we would have that $(\tilde u_1, \tilde w_1) \equiv (0,0)$, meaning $\beta_1 = 0$ and the flow is hydrostatic.  If $V_2 \neq 0$, then we can write $\psi_1 = \Lambda(\psi_2)$ for an affine function $\Lambda$.  The argument from the previous case shows that this forces the pressure to be hydrostatic.  
\end{proof}

\section{Discussion}

We conclude the paper with some informal discussion of some simple extensions, as well as two open problems stemming from the arguments above.  

\subsection*{Capillary-gravity internal waves}

One can also consider the question of rigidity for \emph{capillary-gravity} internal waves, meaning the effects of surface tension on the interface are included in the model.  Mathematically, this entails replacing the dynamic condition \eqref{dynamic on interface} with 
\be
\label{capillary dynamic}
	P_2 - P_1 = \sigma \frac{(1+\eta_y^2) \eta_{xx} - 2 \eta_x \eta_y \eta_{xy} + (1+\eta_x^2) \eta_{yy}}{ (1+\eta_x^2+\eta_y^2)^{3/2}} \qquad \textrm{on } z = \eta(x,y),
\ee
where $\sigma > 0$ is the coefficient of surface tension.  The right-hand side above is the mean curvature of the free boundary, and hence \eqref{capillary dynamic} enforces the Young--Laplace law for the pressure jump.

Thanks to Theorem~\ref{thm vorticity} and Lemma~\ref{lem y indep}, a straightforward adaptation of the proof of \cite[Theorem 2]{wahlen2014non} quickly yields the following result on the nonexistence of constant vorticity internal capillary-gravity waves.

\begin{theorem}[Capillary-gravity waves] \label{capillary theorem}
Any solution to the internal capillary-gravity wave problem \eqref{momentum equation}--\eqref{kinematic on interface}, \eqref{kinematic on rigid}, \eqref{capillary dynamic} satisfying 
\begin{enumerate}[label=\rm(\roman*)]
\item  $\boldsymbol{\omega}_1$ and $\boldsymbol{\omega}_2$ are constant, parallel, and nonzero,
\item  $\| \boldsymbol{u}_1 \|_{C^1}$, $\| \boldsymbol{u}_2 \|_{C^1}$, $\| \eta \|_{C^2} < \infty$, and
\item $\sup{(P_{1z} - P_{2z})|_{z = \eta}} < 0$,
\end{enumerate}
is necessarily two dimensional.  
\end{theorem}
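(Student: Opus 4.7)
The strategy is to follow the reduction from the proof of Theorem~\ref{rigidity theorem} as far as possible, then replace the analytic-continuation step with a maximum-principle argument that uses the Young--Laplace condition~\eqref{capillary dynamic} differentiated in $y$ together with the Taylor sign hypothesis~(iii). Crucially, neither the proof of Theorem~\ref{thm vorticity} nor that of Lemma~\ref{lem y indep} makes any use of the pressure-continuity condition~\eqref{dynamic on interface}; both rely only on the bulk Euler system, the wall kinematic conditions, and the real-analyticity of the velocities and pressures. Hypotheses~(i)--(ii) are thus more than sufficient to conclude that $\boldsymbol{\omega}_i = (0,\beta_i,0)$ with $\beta_i \ne 0$, that the velocity and pressure fields $\tilde{\boldsymbol u}_i(x,z)$ and $\tilde P_i(x,z)$ are $y$-independent in each layer, and that $v_i \equiv V_i$ is a constant.

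Setting $G(x,z) := \tilde P_2(x,z) - \tilde P_1(x,z)$, the plan is to rewrite~\eqref{capillary dynamic} as $G(x,\eta(x,y)) = \sigma K[\eta](x,y)$ on $\mathbb{R}^2$, where $K[\eta]$ is the scalar mean curvature of the graph $z=\eta(x,y)$. Differentiating this identity in $y$ yields
\[
\sigma\, K'[\eta]\,\eta_y \;-\; G_z(x,\eta)\,\eta_y \;=\; 0,
\]
where $K'[\eta]$ is the Fr\'echet derivative of $K$ at $\eta$. Its principal symbol is the positive-definite matrix $(1+|\nabla\eta|^2)^{-3/2}\bigl[(1+|\nabla\eta|^2)I - \nabla\eta\otimes\nabla\eta\bigr]$, so since $\|\eta\|_{C^2}<\infty$ the operator is uniformly elliptic with bounded coefficients depending on $\eta$ and its first two derivatives. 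Hypothesis~(iii) translates directly to a uniform lower bound $G_z(x,\eta(x,y)) \ge c_0 > 0$. Thus $\phi := \eta_y$ is a bounded classical solution on $\mathbb{R}^2$ of an equation of the form $\mathcal{L}\phi = c\,\phi$ with $\mathcal{L}$ uniformly elliptic, bounded coefficients, and $c \ge c_0 > 0$. A standard translation-and-compactness argument --- take a sequence realising $\sup\phi$, translate, extract a $C^{1,\alpha}_{\mathrm{loc}}$ limit via Schauder estimates, and apply the strong maximum principle at the interior maximum of the limit --- then forces $\sup\phi \le 0$, and symmetrically $\inf\phi\ge 0$, so $\phi\equiv 0$. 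Combined with the $y$-independence of $\tilde{\boldsymbol u}_i$ and $\tilde P_i$, this shows $\eta=\eta(x)$ and the wave is two dimensional.

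The main technical obstacle I anticipate is the regularity bookkeeping needed to apply the strong maximum principle, since $\eta$ is only assumed $C^2$ a priori and so $\eta_y$ need not itself be $C^2$. This is resolved by an elliptic-regularity bootstrap. The pressures $\tilde P_i$ are real-analytic (each velocity component is harmonic by~\eqref{harmonic velocity}, and $\tilde P_i$ satisfies a Poisson equation with real-analytic forcing, as noted at the beginning of Section~2), so $G$ is real-analytic; then the quasilinear elliptic relation $\sigma K[\eta] = G(x,\eta)$ has a real-analytic right-hand side, and Schauder plus analytic-regularity theory improve $\eta$ to be smooth, indeed real-analytic. Once this upgrade is in hand, the linearised equation above holds classically and the Liouville step is routine, yielding a straightforward internal-wave analogue of Wahl\'en's argument in~\cite{wahlen2014non}.
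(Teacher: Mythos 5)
Your proposal is correct and follows exactly the route the paper intends: it invokes Theorem~\ref{thm vorticity} and Lemma~\ref{lem y indep} (whose proofs indeed never use the dynamic condition) and then adapts Wahl\'en's Theorem~2 by differentiating the Young--Laplace condition in $y$ to get a uniformly elliptic equation for $\eta_y$ with a sign-definite zeroth-order coefficient from hypothesis (iii), concluding via a translation--compactness Liouville argument. The paper leaves these details to the cited reference, and your write-up (including the elliptic bootstrap handling the $C^2$-only regularity of $\eta$) supplies them accurately.
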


Notice that the sign requirement on $P_{1z} - P_{2z}$ along the interface is consistent with the two-fluid Rayleigh--Taylor criterion due to Lannes \cite{lannes2013stability}, though it is not equivalent to well-posedness like in the one-fluid case.

\subsection*{Non-parallel vorticities}
Second, it is natural to ask whether Theorem~\ref{rigidity theorem} can be extended to the case $\boldsymbol{\omega}_1$ and $\boldsymbol{\omega}_2$ are non-parallel.  For instance, suppose that they are orthogonal with $\boldsymbol{\omega}_1$ aligned along the $x$-axis and $\boldsymbol{\omega}_2$ aligned along the $y$-axis.   In view of Lemma~\ref{lem y indep}, this would imply that 
\[
\begin{aligned}
	\boldsymbol{u}_1 & = \left(\tilde u_1(x,z), V_1, \tilde w_1(x,z)\right) &\qquad& \boldsymbol{u}_2 = \left( U_2, \tilde v_2(y,z), \tilde w_2(y,z) \right), \\
	P_1 & = \tilde P_1(x,z) & \qquad & P_2 = \tilde P_2(y,z),
\end{aligned}
\]
for constants $V_1$ and $U_2$.  We conjecture that this is not possible if $\rho_1 \neq \rho_2$, and even in the Boussinesq setting it can only be that the flow in both layers is shear --- that is, $\nabla\eta$, $\tilde w_1$, and $\tilde w_2$ vanish identically, while $\tilde u_1$ and $\tilde v_2$ are independent of the horizontal variables.  Indeed, the dynamic boundary condition on the interface would then give
\[
	\tilde P_1(x,\eta(x,y)) = \tilde P_2(y,\eta(x,y)) \qquad \textrm{for all } (x,y) \in \mathbb{R}^2,
\]
which coupled with the kinematic conditions appears to be overdetermined.  However, the argument for the parallel vorticity case do not apply directly, as we cannot project into a common two-dimensional domain.   

\subsection*{Pressure reconstruction}
Lastly, in the proof of Theorem~\ref{rigidity theorem}, we were confronted with the possibility that on some open subset $\mathcal{U} \subset \mathbb{R}^2$, there are two solutions to the incompressible steady Euler equations with (potentially different) constant densities and vorticities.  That is, the elliptic problem
\be
\label{pressure problem}
	\left\{
	\begin{aligned}
		\Delta \psi + \beta & = 0 \\
		\nabla \left( \frac{1}{2} |\nabla \psi|^2 - \beta \psi + g z + \frac{1}{\rho} P \right) & = 0
	\end{aligned}
	\right.
	\qquad \textrm{in } \mathcal{U}.
\ee
was satisfied by the triples $(\psi_1, \rho_1, \beta_1)$ and $(\psi_2, \rho_2, \beta_2)$.  In the context of the proof of Theorem~\ref{rigidity theorem}, we had additional information about the level sets of $\psi_1$ and $\psi_2$ due to the kinematic condition (for the three-dimensional problem), which was how we ultimately found that this situation could not occur unless $\rho_1 = \rho_2$, $\beta_1 = \beta_2$, and $\psi_1$ was an affine function of $\psi_2$.  However, one could reasonably ask whether the same conclusion follows simply from \eqref{pressure problem} if say $\psi_1$ and $\psi_2$ share a common streamline.  This question is of considerable independent interest, both mathematically and to hydrodynamical applications.  On the one hand,  \eqref{pressure problem} is a parameter-dependent Poisson problem coupled with an unusual gradient constraint.  Thus unique solvability falls into the broader category of unique continuation of elliptic PDE.  On the other hand, determining $(\psi, \rho,\beta)$ from \eqref{pressure problem} amounts to recovering the flow from pressure data, which has been the subject of a number of papers in the applied literature.  Constantin \cite{constantin2012pressure} provided an explicit formula for the surface elevation of a two-dimensional irrotational solitary wave in finite-depth water in terms of the trace of the pressure on bed.  The central observation of that work is that one can derive from the pressure on the bed and Bernoulli's principle Cauchy data for an elliptic equation describing the flow.  Henry \cite{henry2013pressure} extended this idea to general real-analytic vorticity (assuming the absence of stagnation points) using the Dubreil-Jacotin formulation of the steady water wave problem and Cauchy--Kovalevskaya theory.   Chen and Walsh \cite{chen2018unique} later  proved an analogous result with vorticity of Sobolev regularity and allowed for density stratification using strong unique continuation techniques.  See also \cite{clamond2013recovery,henry2018prediction,clamond2020extreme} for further results of this variety.   Pressure recovery for \eqref{pressure problem} is simpler in that we require constant vorticity and have pressure data on an open set, rather than the boundary.  However, it is important that we do not specify a priori the values of $\rho$ or $\beta$, which is a large departure from these earlier works.

\section*{Acknowledgments}
The research of RMC is supported in part by the NSF through DMS-1907584. The research of LF is supported in part by the NSF of Henan Province of China through Grant No. 222300420478 and the NSF of Henan Normal University through Grant No. 2021PL04.  The research of SW is supported in part by the NSF through DMS-1812436. 

\bibliographystyle{siam}
\bibliography{nonexistence}

\end{document}